\numberwithin{equation}{section}
\numberwithin{figure}{section}
 \theoremstyle{definition}
 \newtheorem*{defn*}{\protect\definitionname}
\theoremstyle{plain}
  \theoremstyle{remark}
  \theoremstyle{plain}
  \theoremstyle{plain}
  \theoremstyle{plain}
  \theoremstyle{definition}
\theoremstyle{definition}
\theoremstyle{definition}
\newtheorem{definition}{Definition}
\newtheorem{theorem}[definition]{Theorem}
\newtheorem{remark}[definition]{Remark}
\newtheorem{proposition}[definition]{Proposition}
\newcounter{enumctr}
\DeclareMathAccent{\Circ}{\mathalpha}{operators}{"17}
\newcommand{\cC}{\mathcal{C}}
\newcommand{\R}{\mathbb{R}}
  \providecommand{\corollaryname}{Corollary}
  \providecommand{\definitionname}{Definition}
  \providecommand{\lemmaname}{Lemma}
  \providecommand{\propositionname}{Proposition}
\providecommand{\theoremname}{Theorem}
\begin{document}
\selectlanguage{english}%


\title{Generation of random dynamical systems from fractional stochastic delay differential equations}

\author{Luu Hoang Duc, Bj{\"o}rn Schmalfuss (FSU-Jena), Stefan Siegmund}

\titlepage

\selectlanguage{american}%
\setcounter{section}{0}
\date{Submitted January 24, 2014}

\title{Generation of random dynamical systems from fractional stochastic delay differential equations}

\author{Luu Hoang Duc\\
Institute of Mathematics, VAST, Vietnam
\& \\
Institute of Analysis, 
Technische Universit\"at Dresden\\[3ex]
Stefan Siegmund \\
Institute of Analysis\\
Technische Universit\"at Dresden\\[3ex]
        Bj{\"o}rn Schmalfuss\\
        Institute of Stochastics\\
        Friedrich-Schiller-Universit{\"a}t Jena }
\maketitle


\begin{abstract}
In this note we prove that a fractional stochastic delay differential equation which satisfies natural regularity conditions generates a continuous random dynamical system on a subspace of a H{\"o}lder space which is separable.
\end{abstract}
\vspace{1ex} \noindent {\bf Key words and phrases:} fractional Brownian motion, stochastic differential equations, stochastic delay differential equations, stochastic functional differential equations.
\\[1ex]
{\bf 2010 Mathematics Subject Classification:} 37L55; 60G22; 34K50; 60H10; 37H05

%

\section{Introduction}

Fractional Brownian motion (fBm) is a family of centered Gaussian processes $B^H = \{B^H(t)\}$, $t\in \R$ or $\R_+$, indexed by the Hurst parameter $H \in (0,1)$ with continuous sample paths and the covariance function
\[
R_H(s,t) = \tfrac{1}{2}(t^{2H} + s^{2H} - |t-s|^{2H}).
\]
It is a self-similar process with stationary increments and has a long memory when $H > \frac{1}{2}$ (see Mandelbrot and van Ness \cite{mandelbrot}, or Beran \cite{beran}).

In the last decade, stochastic differential equations driven by fractional Brownian motions (in short fSDE) have attracted a lot of research interest (see \cite{biagini, nualart1, nualart2, nualart3, bou1, bou2, bou3, atienza1, atienza2, chen} and the references therein). Since $B^H$ is not a semimartingale if $H \ne \frac{1}{2}$, we cannot apply the classical Ito theory to construct a stochastic integral w.r.t.\ the fBm by taking the limit in the sense of probability convergence of a sequence of Darboux sums. In contrast, the stochastic integral w.r.t.\ the fBm can be defined as a point-wise limit using the so called {\it rough path theory} as seen in Friz and Victoir \cite{friz}, Coutin and Qian \cite{coutin}, or {\it fractional calculus theory}, as seen in Samko et al. \cite{samko}, Z{\"a}hle \cite{zahle}. For this approach, the theory of stochastic differential equations driven by the fBm has been developed intensively by Nualart and R{\u{a}}{\c{s}}canu \cite{nualart3} for finite dimensional spaces, and Maslowski and Nualart \cite{nualart2}, Hu and Nualart \cite{nualart1} for infinite dimensional spaces.

Recently, stochastic functional differential equations driven by fractional Brownian motions (or in short fSFDE) have been studied by several authors. Boufoussi and Hajji \cite{bou1} proved the existence and uniqueness theorem for stochastic delay differential equations driven by fBm (fSDDE) in a finite dimensional space, and then extended the results for systems in a separable Hilbert space in Boufoussi et al.\ \cite{bou2,bou3}. They also proved that the solution of an fSDDE is continuous w.r.t.\ the initial values in the phase space.

One important issue in studying dynamics of stochastic differential equations (SDE) is to check whether or not it generates a random dynamical system (RDS). This issue is presented  for SDE in Arnold \cite{arnold}, and then proved by Mohammed and Scheutzow \cite{moham2} for a class of stochastic delay differential equations (SDDE) which satisfy some regularity conditions. It is worth mentioning that stochastic delay differential equations driven by a Wiener process in general do not generate a continuous RDS. For example, consider the following simple one-dimensional equation
\begin{eqnarray}\label{counterexample}
  dx(t)&=& x(t-1) dW(t),\\ \nonumber
  x(0) &=& v \in \R,\ x_0 = \eta\in L^2([-1,0],\mathbb{R}).
\end{eqnarray}
We know from Mohammed \cite[p.\ 144]{moham1} that this SDDE does not generate a continuous random dynamical system on $\mathcal{M} = \R \times L^2([-r,0],\R)$. Even the solution of \eqref{counterexample} does not depend continuously and linearly on the initial state $\eta \in L^2([-r,0],\R)$.

In this paper, we follow the technique developed by Boufoussi et al.\ \cite{bou1} to study a class of fSDDE in which the coefficient functions are time independent. An important remark here is that, unlike for the SDDE case in which the usual phase space $\mathcal{M} = \R \times L^2([-r,0],\R^d)$ is a separable Banach space, in the context of fSDDE, the phase space is often a H{\"o}lder space of the form $C^{1-\alpha}([-r,0], \R^d)$ and is therefore not separable. This difference makes it challenging to prove the measurability and even impossible to prove the continuity of the cocycle with respect to the time argument, thus it is very hard to apply \cite{castaing} to prove the measurability of the cocycle, see Remark \ref{rem1}. Therefore, we will have to restrict our consideration to a subspace $C^{0,1-\alpha}([-r,0],\R^d)$ of $C^{1-\alpha}([-r,0],\R^d)$ which is separable (see Friz and Victoir \cite{friz}). As we show in Theorem 1, this class of fSDDE then generates a continuous random dynamical system.


\section{Preliminaries on random dynamical systems}

Let $(\Omega,\mathcal{F},\mathbb{P})$ be a probability space. On this probability space we consider a measurable flow $\theta$
\[
\theta:\mathbb{R}\times\Omega\to\Omega
\]
such that $\theta_t(\cdot): \Omega \to \Omega$ is $\mathbb{P}$-preserving, i.e.\ $\mathbb{P}(\theta_t^{-1}(A)) = \mathbb{P}(A)$ for every $A \in \mathcal{F}$, $t \in \R$, and $(\theta_t)_{t\in \mathbb{R}}$ satisfies the group property, i.e.\ $\theta_{t+s} = \theta_t \circ \theta_s$ for all $t,s \in \mathbb{R}$. A general model for noise is the quadruple $(\Omega,\mathcal{F},\mathbb{P},(\theta_t)_{t\in \mathbb{R}})$ which is called a {\em metric dynamical system}.

An example of a metric dynamical system is given as follows: Let $\Omega$ denote the space of all continuous functions $\omega: \mathbb{R} \to \mathbb{R}^m$ such that $\omega(0) = 0$; $\mathcal{F}$ the Borel $\sigma$-algebra of $\Omega$ generated by the compact open topology; $\mathbb{P}$ the Wiener measure on $\mathcal{F}$ generated by a fractional Brownian motion $(W_t)_{t\in \R}$ on $\R^m$. For each $t \in \R$, construct the {\em Wiener shift} $\theta_t: \Omega \to \Omega$, i.e.\ $\theta_t \omega(\cdot) = \omega(t+ \cdot) - \omega(t)$. Then $(\theta_t)_{t\in\R}$ is $\mathbb{P}$-preserving and satisfies the group property. In a similar manner, we can construct a metric dynamical system for fractional Brownian motion. In particular, we only need to replace the Wiener measure by the Gau{\ss}ian measure generated by fractional Brownian motion. Indeed, the shift operators $\theta_t$ for $t\in\R$ preserve this measure which follows by the homogeneity of the increments of the fractional Brownian motion, see Biagini et al.\ \cite[p.\ 5]{biagini}. A fractional Brownian motion with Hurst parameter $H$ has a $\beta$-H{\"o}lder continuous version for any fixed $\beta\in (0,H)$. For our purpose we need a metric dynamical system having H{\"o}lder continuous paths. Let us denote the subset of $\Omega$ consisting of paths which are $\beta$-H{\"o}lder continuous on any non-trivial compact interval of $\R$ by $\Omega_\beta$. Note that this set is invariant with respect to $(\theta_t)_{t\in\R}$. Let us consider the trace-$\sigma$-algebra
$\mathcal{F}_\beta=\mathcal{F}\cap \Omega_\beta$ and let $\mathbb{P}$ be restricted to this $\sigma$-algebra. It is not hard to see that
the restriction of $\theta$ to $\Omega_{\beta}\times \R$ is $\mathcal{F}_\beta\otimes\mathcal{B}(\R),\mathcal{F}_\beta$-measurable, see Caraballo et al. \cite{caduluschm10}. Hence we can consider a metric dynamical system with H{\"o}lder continuous path which we are going to use in the following. However, for brevity we will keep for this new metric dynamical system the old notation $(\Omega,\mathcal{F},\mathbb{P},\theta)$.

Let $X$ be a Banach space. Then we define an RDS as a  measurable mapping
\[
\varphi:\R^+\times \Omega\times X\to X
\]
satisfying the cocycle property
\begin{eqnarray*}
\varphi(t+s,\omega,x)&=&\varphi(t,\theta_s\omega,\cdot)\circ\varphi(s,\omega,x)\qquad \text{for all }
t,\,s\in\mathbb{R}_+,\,\omega\in\Omega,\,x\in X,\qquad \\
\varphi(0,\omega)&=&{\rm id}_X.
\end{eqnarray*}
An RDS $\varphi$ is called {\em continuous} if each mapping $x \mapsto \varphi(t,\omega,x)$ is continuous. If $\Omega$ consists only of one element so that $\theta_s = id$ for all $s\in \R$, then such $\omega \in \Omega$ can be neglected and $\varphi$ is indeed a semi-group and has probability one.


\section{Stochastic integrals with respect to the fractional Brownian motion}

Before considering the main problem, it is necessary to define the stochastic integral in the sense of the generalized integration by parts formula. We should mention here the fundamental work of Young \cite{young} which allows to define a kind of Riemann-Stieltjes integral for H{\"o}lder continuous integrands and integrators (see also Z{\"a}hle \cite{zahle} for the integation by parts method). Indeed, we first introduce function spaces
\begin{eqnarray*}
C^{\nu}([-r,T],\R^d) &=& \Big\{f: [-r,T] \to \R^d, \|f\|_{\nu} := \|f\|_{\infty} + \sup \limits_{-r\leq s < t\leq T} \tfrac{\|f(t)-f(s)\|}{|t-s|^{\nu}} < \infty \Big\}
\\
W^{\alpha,1}_0 ([0,T],\R^d) &=& \Big\{f: [0,T] \to \R^d, \|f\|_{\alpha,1} := \int_0^T \Big( \tfrac{\|f(s)\|}{s^{\alpha}} + \int_0^s \tfrac{\|f(s)-f(u)\|}{|s-u|^{1+\alpha}} du \Big) ds < \infty \Big\}
\\
W^{1-\alpha,\infty}_T([0,T],\R^d) &=& \Big \{g: [0,T] \to \R^d, \|g\|_{1-\alpha,\infty,T} := \sup \limits_{0< s< t< T} \Big(\tfrac{\|g(t)-g(s)\|}{|t-s|^{1-\alpha}} + {} \\
& & {} + \int_s^t \tfrac{\|g(u)-g(s)\|}{|u-s|^{2-\alpha}} du\Big) < \infty \Big\}.
\end{eqnarray*}
For each fixed $\frac{1}{2}< \nu< H$, choose $\alpha \in (1-\nu, \frac{1}{2})$.
It can be proved (see Nualart and R{\u{a}}{\c{s}}canu \cite{nualart3} or Boufoussi and Hajji \cite{bou1}) that
\begin{equation}\label{prop2}
C^{\nu}([0,T],\R^d) \subset W^{1-\alpha,\infty}_T([0,T],\R^d) \subset C^{1-\alpha} ([0,T],\R^d)\subset W^{\alpha,1}_0([0,T],\R^d).
\end{equation}
Since $B^H$ is an fBm, each trajectory $B^H(\cdot,\omega) = \omega(\cdot)$ belongs to $C^{\nu}([0,T],\R)$. Thus for each $f \in W^{\alpha,1}_0([0,T],\R^d)$ and each trajectory $\omega \in W^{1-\alpha,\infty}_T([0,T],\R)$ we can define fractional derivatives
\begin{eqnarray*}
D^{\alpha}_{a^+} f(s) &:=& \frac{1}{\Gamma(1-\alpha)} \Big(\frac{f(s)}{(s-a)^{\alpha}} + \alpha \int_a^s \frac{f(s)-f(u)}{(s-u)^{1+\alpha}}du \Big) \\
D^{1-\alpha}_{t^-} \omega(s) &:=& \frac{(-1)^{1-\alpha}}{\Gamma(\alpha)} \Big(\frac{\omega(s)}{(t-s)^{1-\alpha}} + (1-\alpha) \int_s^t \frac{\omega(s)-\omega(u)}{(s-u)^{2-\alpha}}du \Big),
\end{eqnarray*}
$0\le a <t\le T$. Following Nualart and R{\u{a}}{\c{s}}canu \cite{nualart3}, Z{\"a}hle \cite{zahle}, we define
\[
\int_{a}^{b} fd w := (-1)^{\alpha} \int_{a}^{b} D^{\alpha}_{a^+} f(s) D^{1-\alpha}_{b^-}\omega_{b^-}(s)ds,
\]
where $\omega_{b^-}(s) = \omega(s)-\omega(b)$. Additionally, by defining
\[
\Lambda_{\alpha}(g) := \frac{1}{\Gamma(1-\alpha)} \sup \limits_{0<s<t<T} |D^{1-\alpha}_{t^-} g_{t^-}(s)|,
\]
we have the estimate
\[
\Big \|\int_0^t f d\omega \Big \| \leq \Lambda_{\alpha}(\omega) \|f\|_{\alpha,1} \leq \frac{1}{\Gamma(1-\alpha) \Gamma(\alpha)} \|\omega\|_{1-\alpha,\infty,T} \|f\|_{\alpha,1}.
\]
For more details, see Nualart and R{\u{a}}{\c{s}}canu \cite{nualart3} and Boufoussi and Hajji \cite{bou1}.

Motivated by Friz and Victoir \cite[Theorem 5.33]{friz}, we introduce the following subspace of $C^{\beta}([-r,T],\R^d)$, for $\beta\in (0,1]$
\begin{equation}\label{eq0}
C^{0,\beta}([-r,T],\R^d) := \Big\{\eta \in C^{\beta}([-r,T],\R^d): \lim \limits_{\delta \to 0} \|\eta\|_{\beta,\delta,-r,T} = 0\Big\},
\end{equation}
where
\[
 \|\eta\|_{\beta,\delta,-r,T}:= \sup \limits_{\substack{-r\leq s < t\leq T,\\ |t-s|< \delta }} \tfrac{\|\eta(t)-\eta(s)\|}{|t-s|^{\beta}}
\]

By Proposition 5.38 in Friz and Victoir \cite{friz}, $C^{0,\beta}([-r,T],\R^d)$ is a separable Banach space.

Now consider the fSDDE in the differential form
\begin{eqnarray}\label{eq01}
dX(t) &=& F(X_t)dt + G(X_t)dB^H(t), \  t\geq 0\\
X_0 &=& \eta \in \cC_r = C([-r,0],\R^d) \nonumber
\end{eqnarray}
or in the integral form
\begin{eqnarray}\label{eq02}
X(t) &=& \eta(0)+ \int_0^tF(X_s)ds + \int_0^t G(X_s)dB^H(s), \  t\geq 0\\
X_0 &=& \eta \in \cC_r  \nonumber
\end{eqnarray}
where $r>0$ is the finite delay and $\cC_r$ is the space of continuous functions $\eta$ from $[-r,0]$ to $\R^d$ endowed with the uniform norm
$\|\eta\|_{\infty} = \max \limits_{s\in [-r,0]} \|\eta(s)\|$, $X_s \in \cC_r$ denotes the function defined by $X_s(\cdot) = X(s+\cdot)$, and $F,G: \cC_r \to \R^d$ are coefficient functions.

We consider the following assumptions on the coefficients of our fSDDE \eqref{eq01}.

\paragraph{($H_F$)} The function $F$ is globally Lipschitz continuous and thus has linear growth, i.e.\ there exist constants $L_1,L_2 >0$ such that for all $\xi, \eta \in \cC_r$
\[
\|F(\xi) - F(\eta)\| \leq L_1 \|\xi-\eta\|_{\infty}\ \quad\text{and}\quad \|F(\xi)\| \leq L_2(1+ \|\xi\|_{\infty}).
\]

\paragraph{($H_G$)} The function $G$ is $C^1$ such that its Frechet derivative w.r.t.\ $\xi$ is bounded and globally Lipschitz continuous, i.e.\ there exist constants $L_3,L_4 >0$ such that for all $\xi, \eta \in \cC_r$
\[
\|D^1 G(\xi)\| \leq L_3 \quad\text{and}\quad \|D^1 G(\xi) - D^1 G(\eta)\| \leq L_4 \|\xi-\eta\|_{\infty}.
\]


\begin{theorem}\label{mainthm}
Assume that $F,G$ satisfy the assumptions $H_F$ and $H_G$. Fix $\alpha \in (1-H, \frac{1}{2})$ and $T>0$. Then for each $\eta \in C^{0,1-\alpha}([-r,0],\R^d)$, there exists a unique solution $X(t,\omega,\eta)$ of \eqref{eq02} such that $X(\cdot,\omega,\eta) \in C^{0,1-\alpha}([-r,T],\R^d)$ almost surely. The solution generates a continuous random dynamical system $\varphi: \R_+ \times \Omega \times C^{0,1-\alpha}([-r,0],\R^d) \to C^{0,1-\alpha}([-r,0],\R^d)$ which is given by
\begin{equation}\label{cocycle}
\varphi(t,\omega,\eta) = X_t(\cdot,\omega,\eta), \ \forall t\geq 0.
\end{equation}
\end{theorem}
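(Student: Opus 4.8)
The plan is to obtain the pathwise solution from the deterministic fractional-calculus theory of Nualart--R{\u{a}}{\c{s}}canu \cite{nualart3} and Boufoussi--Hajji \cite{bou1}, and then to add the three ingredients specific to the random dynamical systems statement: invariance of the \emph{separable} H{\"o}lder subspace under the solution map, the cocycle identity from uniqueness plus shift-covariance of the integral, and joint measurability of $\varphi$. Concretely, given $\alpha\in(1-H,\tfrac12)$ I first fix $\nu\in(1-\alpha,H)$ and restrict attention to the $\theta$-invariant full-measure set $\Omega_\nu\subset\Omega$ of paths that are $\nu$-H{\"o}lder on every compact interval; by the interpolation $\|\omega(t)-\omega(s)\|\le[\omega]_\nu\,|t-s|^{\nu-(1-\alpha)}|t-s|^{1-\alpha}$ each such $\omega$ also lies in $C^{0,1-\alpha}$ on compacts and has $\Lambda_\alpha(\omega)<\infty$, so the inclusions \eqref{prop2} and the estimate $\|\int_0^t f\,d\omega\|\le\Lambda_\alpha(\omega)\|f\|_{\alpha,1}$ of Section~3 are available. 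For fixed $\eta\in C^{0,1-\alpha}([-r,0],\R^d)$ and $\omega\in\Omega_\nu$ I consider on $C^{1-\alpha}([-r,T_0],\R^d)$ the map $X\mapsto\mathcal{T}X$ with $\mathcal{T}X|_{[-r,0]}=\eta$ and $\mathcal{T}X(t)=\eta(0)+\int_0^t F(X_s)\,ds+\int_0^t G(X_s)\,d\omega(s)$ on $[0,T_0]$. Exactly as in \cite{bou1,nualart3}, using $(H_F)$, $(H_G)$ and the above estimate, $\mathcal{T}$ is a contraction on a suitable ball when $T_0$ is small in terms of $L_1,\dots,L_4$ and of $\Lambda_\alpha(\omega)$ on $[0,T]$; the linear growth of $F$ and of $G$ (boundedness of $D^1 G$) gives the a priori H{\"o}lder bound ruling out blow-up, so the local solutions patch by uniqueness into a unique $X(\cdot,\omega,\eta)\in C^{1-\alpha}([-r,T],\R^d)$ for every $T$, i.e.\ a solution on $[-r,\infty)$.

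\textbf{Regularity in the separable subspace.} On $[-r,0]$ the solution equals $\eta\in C^{0,1-\alpha}$. On $[0,T]$ the drift $t\mapsto\int_0^t F(X_s)\,ds$ is Lipschitz (linear growth of $F$), hence in $C^1\subset C^{0,1-\alpha}$; and since $s\mapsto G(X_s)$ is $(1-\alpha)$-H{\"o}lder (Lipschitzness of $G$ and of $s\mapsto X_s$ in $\cC_r$) while $\omega\in C^\nu$ with $\nu>1-\alpha$, the local form of the Section~3 estimate bounds the increment of $t\mapsto\int_0^t G(X_s)\,d\omega(s)$ over $[s,t]$ by a constant times $(t-s)^\nu$, so that term is $\nu$-H{\"o}lder, hence in $C^{0,1-\alpha}$. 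Matching values at $t=0$ then gives $X(\cdot,\omega,\eta)\in C^{0,1-\alpha}([-r,T],\R^d)$ for all $T$ (cf.\ Friz--Victoir \cite[Theorem 5.33]{friz}); in particular every segment $X_t(\cdot,\omega,\eta)$ lies in the separable Banach space $C^{0,1-\alpha}([-r,0],\R^d)$, so $\varphi$ in \eqref{cocycle} is well defined.

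\textbf{Cocycle property.} For $s\ge0$ set $Y(u):=X(s+u,\omega,\eta)$, $u\ge-r$. Since the Wiener shift gives $\omega(s+\cdot)-\omega(s)=\theta_s\omega$ and the Young--Z{\"a}hle integral is covariant under time translation, $\int_0^t g\,d(\theta_s\omega)=\int_s^{s+t}g(\cdot-s)\,d\omega$, one checks that $Y$ solves \eqref{eq02} with driving path $\theta_s\omega$ and initial segment $Y_0=X_s(\cdot,\omega,\eta)$, whence by uniqueness $X(s+u,\omega,\eta)=X(u,\theta_s\omega,X_s(\cdot,\omega,\eta))$ for all $u\ge-r$. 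Evaluating on segments at time $t\ge0$ gives $\varphi(t+s,\omega,\eta)=X_{t+s}(\cdot,\omega,\eta)=X_t(\cdot,\theta_s\omega,\varphi(s,\omega,\eta))=\varphi(t,\theta_s\omega,\varphi(s,\omega,\eta))$ and $\varphi(0,\omega,\eta)=\eta$.

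\textbf{Continuity and measurability.} Continuity of $\eta\mapsto\varphi(t,\omega,\eta)$ follows from the Lipschitz dependence $\|X(\cdot,\omega,\eta)-X(\cdot,\omega,\tilde\eta)\|_{1-\alpha}\le C(\omega,T)\|\eta-\tilde\eta\|_{1-\alpha}$ (same contraction argument), restricted to segments. Continuity of $t\mapsto\varphi(t,\omega,\eta)$ into $C^{0,1-\alpha}([-r,0],\R^d)$ is where the separable subspace is essential: with $X=X(\cdot,\omega,\eta)$, split the H{\"o}lder quotient of $X_t-X_{t'}$ over pairs $a,b$ with $|a-b|<\delta$ (bounded by $2\,\|X\|_{1-\alpha,\delta,-r,T}$, small for small $\delta$ since $X\in C^{0,1-\alpha}$) and with $|a-b|\ge\delta$ (bounded by $2\,\delta^{-(1-\alpha)}\|X_t-X_{t'}\|_\infty$, small for small $|t-t'|$ by uniform continuity of $X$), to get $\|X_t-X_{t'}\|_{1-\alpha}\to0$ as $t\to t'$. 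For measurability, for fixed $\eta$ the Picard iterates $\omega\mapsto X^{(n)}(\cdot,\omega,\eta)\in C^{0,1-\alpha}([-r,T],\R^d)$ are measurable --- trivial for $n=0$, and propagating along the contraction and through the patching (the initial segments handed to successive subintervals are measurable in $\omega$) because the integral depends continuously on its driving path via the bilinear bound $\|\int_0^\cdot f\,d(\omega-\tilde\omega)\|\le c\,\|\omega-\tilde\omega\|_{1-\alpha,\infty,T}\|f\|_{\alpha,1}$. Separability of $C^{0,1-\alpha}$ then makes the a.s.\ limit $\omega\mapsto X(\cdot,\omega,\eta)$ Borel measurable, and with the $t$-continuity above $(t,\omega)\mapsto\varphi(t,\omega,\eta)$ is jointly measurable; since moreover $\varphi$ is continuous in $\eta$ for each $(t,\omega)$ and $C^{0,1-\alpha}([-r,0],\R^d)$ is separable, a Carath{\'e}odory-type argument (cf.\ \cite{castaing}) yields joint measurability of $\varphi$ on $\R_+\times\Omega\times C^{0,1-\alpha}([-r,0],\R^d)$, so $\varphi$ is a continuous random dynamical system. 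This measurability step is the main obstacle: in the non-separable space $C^{1-\alpha}$ neither the Borel structure nor the $t$-continuity above is at hand, which is precisely why one must work in $C^{0,1-\alpha}$.
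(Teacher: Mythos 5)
Your proposal is correct and follows essentially the same route as the paper: invariance of the separable subspace $C^{0,1-\alpha}$ via the H{\"o}lder-exponent gain in the drift and Young integral terms plus concatenation, the cocycle property from shift-covariance of the integral and uniqueness, $t$-continuity by splitting the H{\"o}lder quotient at a small scale $\delta$ (exactly the paper's $\delta=|t-\tau|^{1/2}$ device, which is where $C^{0,1-\alpha}$ is essential), and measurability via $\omega$-continuity of the contraction iterates followed by a Castaing--Valadier argument. The only cosmetic difference is that you claim the stochastic integral increment is of order $(t-s)^{\nu}$ where the paper derives $(t-s)^{1-\beta}$ for $\beta\in(1-\nu,\alpha)$; both exceed $1-\alpha$, which is all that is needed.
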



\begin{proof}
The proof will be divided into several steps.

{\bf Step 1}: The existence and uniqueness part  of \cite[Proposition 4.1]{bou1} as stated in the following proposition.

\begin{proposition}\label{prop1}
Fix $\nu \in (\frac{1}{2}, H)$ and $\alpha \in (1-\nu,\frac{1}{2})$. For $\omega \in C^{\nu}([0,T],\R)$ and $x \in C^{0,1-\alpha}([-r,T],\R^d)$, denote
\[
I(x)(t) = \int_0^t F(x_s) ds \quad\text{and}\quad J(x)(t) = \int_0^t G(x_s)d\omega(s).
\]
Then, under assumption $H_F$ and $H_G$, we have $I(x), J(x) \in C^{0,1-\alpha}([0,T],\R^d)$.
\end{proposition}

\begin{proof}[Proof of Proposition \ref{prop1}]
The proof is a direct consequence of inequalities (4), (5) and (6) in the proof of \cite[Proposition 4.1]{bou1}, with $T$ replaced by $(t-s)$. In fact, for the Lebesgue integral we have
\begin{eqnarray*}
\|I(x)\|_{1-\alpha,\delta,0,T}=\sup \limits_{\substack{0\leq s < t\leq T,\\ |t-s|< \delta }}\frac{\|I(x)(t)-I(x)(s)\|}{|t-s|^{1-\alpha}} &\leq&  L_2  (1+ \|x\|_{1-\alpha}) \delta^{\alpha}.
\end{eqnarray*}
Thus $I(x)\in C^{0,1-\alpha}([0,T],\R^d)$.

In addition, for $\alpha \in (1-\nu,1/2)$ by choosing $\beta \in (1-\nu,\alpha)$, we then get $C^{1-\alpha}([0,T],\R^d) \subset W^{\beta,1}_0([0,T],\R^d)$ and $\omega \in W^{1-\beta,\infty}_T([0,T],\R^d)$ due to \eqref{prop2}. For a universal constant $c>0$ which may change from line to line we obtain
\begin{align}\label{eq100}
\begin{split}
\|J(x)(t)&-J(x)(s)\| \\
&\leq  c\Lambda_\beta(\omega)\int_s^t\bigg(\frac{\|G(x_r)\|}{(r-s)^{\beta}}
+\int_s^r\frac{\|G(x_r)-G(x_q)\|}{(r-q)^{1+\beta}}dq\bigg)dr\\
&\le c\Lambda_\beta(\omega)\int_s^t\bigg(\frac{\max(L_3,\|G(0)\|)(1+\|x\|_{1-\alpha})}{(r-s)^{\beta}}+
\int_s^r\frac{L_3\|x\|_{1-\alpha}(r-q)^{1-\alpha}}{(r-q)^{1+\beta}}dq\bigg)dr\\
&\le c\Lambda_\beta(\omega)(1+\|x\|_{1-\alpha})(t-s)^{1-\beta}\\
&\le c\Lambda_\beta(\omega)(1+\|x\|_{1-\alpha})(t-s)^{1-\alpha} (t-s)^{\alpha-\beta}.
\end{split}
\end{align}
Thus
\[
\|J(x)\|_{1-\alpha,\delta,0,T}=\sup \limits_{\substack{0\leq s < t\leq T,\\ |t-s|< \delta }}\frac{\|J(x)(t)-J(x)(s)\|}{|t-s|^{1-\alpha}} \leq c \Lambda_{\beta}(\omega)(1+\|x\|_{1-\alpha}) \delta^{\alpha-\beta}.
\]
Similarly, as we have concluded for $I(x)$, we obtain that $J(x)\in C^{0,1-\alpha}([0,T],\R^d)$.

Since $\eta\in C^{0,1-\alpha}([-r,0],\R^d)$ and the above integrals are in $C^{0,1-\alpha}([0,T],\R^d)$, we can concatenate these mappings
to $X(\cdot,\omega,\eta)\in C^{0,1-\alpha}([-r,T],\R^d)$. Indeed, the concatenation of two elements $\eta \in C^{0,1-\alpha}([-r,0],\R^d)$ and $\mu\in C^{0,1-\alpha}([0,T],\R^d) $ with $\eta(0)=\mu(0)$ is an element $\xi$ of $C^{0,1-\alpha}([-r,T],\R^d)$ due to the estimate
\begin{eqnarray*}
\|\xi\|_{1-\alpha,\delta,-r,T} &=& \max \Big\{\sup \limits_{\substack{-r\leq s< t\leq 0,\\ |t-s|< \delta }}\frac{\|\xi(t)-\xi(s)\|}{|t-s|^{1-\alpha}},\sup \limits_{\substack{0\leq s < t\leq T,\\ |t-s|< \delta }}\frac{\|\xi(t)-\xi(s)\|}{|t-s|^{1-\alpha}} ,\\
 &{}&\sup \limits_{\substack{-r\leq s\leq 0 < t\leq T,\\ |t-s|< \delta }}\frac{\|\xi(t)-\xi(s)\|}{|t-s|^{1-\alpha}}\Big\}\\
&=&\max \Big\{\|\xi\|_{1-\alpha,\delta,-r,0}, \|\xi\|_{1-\alpha,\delta,0,T}, \sup \limits_{\substack{-r\leq s\leq 0 < t\leq T,\\ |t-s|< \delta }}\frac{\|\xi(t)-\xi(s)\|}{|t-s|^{1-\alpha}}\Big\}\\
&\leq& \max \Big\{\|\eta\|_{1-\alpha,\delta,-r,0}, \|\mu\|_{1-\alpha,\delta,0,T}, \\
&{}& \sup \limits_{\substack{-r\leq s\leq 0 < t\leq T,\\ |t-s|< \delta }}\frac{\|\xi(t)-\xi(0)\| + \|\xi(0)-\xi(s)\| }{|t-s|^{1-\alpha}}\Big\}\\
&\leq& \max \Big\{\|\eta\|_{1-\alpha,\delta,-r,0}, \|\mu\|_{1-\alpha,\delta,0,T},\\
&{}& \sup \limits_{\substack{-r\leq s\leq 0 < t\leq T,\\ |t|,|s|,|t-s|< \delta }}\frac{\|\mu\|_{1-\alpha,\delta,0,T} |t|^{1-\alpha} + \|\eta\|_{1-\alpha,\delta,-r,0} |s|^{1-\alpha}}{|t-s|^{1-\alpha}} \Big\}\\
&\leq& \|\mu\|_{1-\alpha,\delta,0,T} + \|\eta\|_{1-\alpha,\delta,-r,0}.
\end{eqnarray*}
\end{proof}

With the help of Proposition \ref{prop1}, the proof is then completely analog to the proof of Theorem 2.1 in Boufoussi and Hajji \cite{bou1}, with spaces $C^{1-\alpha}([-r,T],\R^d)$ being replaced by $C^{0,1-\alpha}([-r,T],\R^d)$.

{\bf Step 2}: Assume that $X(t,\omega,\eta)$ is the unique solution of \eqref{eq02} with fixed initial value $\eta \in C^{0,1-\alpha}([-r,0],\R^d)$. Define $\varphi$ as in \eqref{cocycle}. To prove the cocycle property, we use Lemma 5 in Garrido-Atienza et al.\ \cite{atienza1}, which states that for $a,b,c \in \R$ such that $a,b,a-c,b-c \in [0,T]$ and $f\in W^{\alpha,1}_0([0,T],\R^d), \omega \in W^{1-\alpha,\infty}_T([0,T],\R)$,
\[
\int_a^b f(s)d\omega(s) = \int_{a-c}^{b-c} f(s+c) d \theta_c \omega(s).
\]
Then for fixed $t,\tau \geq 0$, $\omega \in \Omega$, and $s \in [-r,0]$, we consider two cases:
\begin{itemize}
\item Case 1: $t+s \geq 0$, then $t+\tau+s\geq 0$. We get
\begin{eqnarray*}
&& \varphi(t+\tau,\omega,\eta)(s) = X_{t+\tau}(s,\omega,\eta) = X(t+\tau+s,\omega,\eta)
\\
&=& \eta(0) + \int_0^{t+\tau+s} F(X_u(\cdot,\omega,\eta))du + \int_0^{t+\tau+s} G(X_u(\cdot,\omega,\eta))d \omega(u)
\\
&=& \eta(0) + \int_0^{\tau} F(X_u(\cdot,\omega,\eta))du + \int_{\tau}^{t+\tau+s} F(X_u(\cdot,\omega,\eta))du + {}
\\
&& {} + \int_0^{\tau} G(X_u(\cdot,\omega,\eta))d \omega(u) + \int_{\tau}^{t+\tau+s} G_u(X(\cdot,\omega,\eta))d \omega(u)
\\
&= & X(\tau,\omega,\eta) + \int_{0}^{t+s} F(X_{u+\tau}(\cdot,\omega,\eta))du + \int_{0}^{t+s} G(X_{u+\tau}\cdot,\omega,\eta))d \theta_{\tau}\omega(u)
\\
& = & X_{\tau}(0,\omega,\eta) + \int_0^{t+s} F(X_{u}(\tau+\cdot,\omega,\eta))du + \int_{0}^{t+s} G(X_u(\tau+\cdot,\omega,\eta))d \theta_{\tau}\omega(u).
\end{eqnarray*}
If we define $Y(\cdot,\omega,\eta) = X(\cdot+\tau,\omega,\eta)$ then we can read off by the uniqueness of the solution of \eqref{eq01} from the right hand side of the last equation that $Y(\cdot,\omega,\eta)$ is the solution of
\begin{eqnarray*}
dY(t) &=& F(Y_t)dt + G(Y_t)dB^H(t,\theta_{\tau}\omega)\\
Y_0 &=& X_{\tau}(\cdot,\omega,\eta) \in C^{0,1-\alpha}([-r,0],\R^d).
\end{eqnarray*}
Thus it follows from the existence and uniqueness of the solution that
\begin{eqnarray*}
\varphi(t+\tau,\omega,\eta)(s) &=& X(t+s,\theta_{\tau} \omega, X_{\tau}(\cdot,\omega,\eta))
= X_t(s,\theta_{\tau} \omega, X_{\tau}(\cdot,\omega,\eta)) \\
&=& \varphi(t,\theta_{\tau} \omega, \varphi(\tau,\omega,\eta))(s).
\end{eqnarray*}

\item Case 2: $t+s <0$. Then by definition
\begin{eqnarray*}
\varphi(t,\theta_{\tau} \omega, \varphi(\tau,\omega,\eta))(s) &=& X(t+s,\theta_{\tau}\omega,X_{\tau}(\cdot,\omega,\eta)) = X_{\tau}(t+s,\omega,\eta) \\
&=& X(\tau + t+ s, \omega,\eta) = X_{t+\tau}(s,\omega,\eta)\\
&=& \varphi(t+\tau,\omega,\eta)(s).
\end{eqnarray*}
\end{itemize}
Thus the cocycle property of $\varphi$ is proved.

{\bf Step 3}: In order to prove the measurability of $\varphi$, we are going to show that $\varphi$ is continuous w.r.t.\ the argument $(t,\eta)$ and measurable w.r.t.\ $\omega$.  First, the continuity of $\varphi(t,\omega,\eta)$ in $ C^{0,1-\alpha}([-r,0],\R^d)$ for  $\eta \in C^{0,1-\alpha}([-r,0],\R^d)$ is a consequence of Proposition 5.4 in Boufoussi and Hajji \cite{bou1}. In fact, it follows from \cite[Proposition 5.4]{bou1} that for fixed $t,\omega$, $\varphi(t,\omega,\eta)$ is locally Lipschitz continuous w.r.t.\ $\eta$ and the local Lipschitz constant depends on $\alpha, T$ and is independent of $t$.

Second, to prove the continuity of $\varphi$ w.r.t.\ $t$ at $\tau$, notice that
\begin{eqnarray}\label{eq11}
&&\|\varphi(t,\omega,\eta)- \varphi(\tau,\omega,\eta)\|_{1-\alpha} = \|X_t(\cdot,\omega,\eta)- X_{\tau}(\cdot,\omega,\eta)\|_{1-\alpha} \\ \nonumber
&&= \sup \limits_{s\in [-r,0]} \|X(t+s,\omega,\eta) - X(\tau+s,\omega,\eta)\|\\ \nonumber
&&+ \sup \limits_{-r\leq s<u\leq 0} \frac{\|X(t+u,\omega,\eta) - X(\tau+u,\omega,\eta) - X(t+s,\omega,\eta) + X(\tau+s,\omega,\eta)\|}{|u-s|^{1-\alpha}}.
\end{eqnarray}
The first term in \eqref{eq11} can be estimated by
\[
\sup \limits_{s\in [-r,0]} \|X(t+s,\omega,\eta) - X(\tau+s,\omega,\eta)\| \leq |t-\tau|^{1-\alpha} \|X(\cdot,\omega,\eta)\|_{1-\alpha}.
\]
To estimate the second term in \eqref{eq11}, write in short $M(u,s)$ for the numerator. Then
\begin{eqnarray*}
&&\sup \limits_{-r\leq s<u\leq 0} \frac{M(u,s)}{|u-s|^{1-\alpha}} \\
&& \leq  \max \Big\{ \sup \limits_{\substack{-r\leq s<u\leq 0,\\|u-s| \leq |t-\tau|^{\frac{1}{2}} }} \frac{M(u,s)}{|u-s|^{1-\alpha}}, \sup \limits_{\substack{-r\leq s<u\leq 0,\\|u-s| \geq |t-\tau|^{\frac{1}{2}} }} \frac{M(u,s)}{|u-s|^{1-\alpha}} \Big\}
\\
&&\leq \max \Big\{ \sup \limits_{\substack{-r\leq s<u\leq 0\\ |u-s|\leq |t-\tau|^{\frac{1}{2}}}} \frac{M(u,s)}{|u-s|^{1-\alpha}}, \sup \limits_{\substack{-r\leq s<u\leq 0\\ |u-s|\geq |t-\tau|^{\frac{1}{2}}}} \frac{M(u,s)}{|u-s|^{1-\alpha}}\Big\} \\
&&\leq \max \Big \{ \sup \limits_{\substack{-r\leq s<u\leq 0\\ |u-s|\leq |t-\tau|^{\frac{1}{2}}}} \frac{\|X(t+u)- X(t+s)\| + \|X(\tau+u)-X(\tau+s)\|}{|u-s|^{1-\alpha}},\\
&& {} \sup \limits_{\substack{-r\leq s<u\leq 0\\ |u-s|\geq |t-\tau|^{\frac{1}{2}}}} \frac{\|X(t+u)-X(\tau+u)\| + \|X(t+s)-X(\tau+s)\|}{|u-s|^{1-\alpha}}\Big\} \\
&& \leq 2 \|X(\cdot,\omega,\eta)\|_{1-\alpha,|t-\tau|^{\frac{1}{2}},-r,T}+ \sup \limits_{\substack{-r\leq s<u\leq 0\\ |u-s|\geq |t-\tau|^{\frac{1}{2}}}} \frac{2 |t-\tau|^{1-\alpha}\|X(\cdot,\omega,\eta)\|_{1-\alpha} }{|t-\tau|^{\frac{1}{2}(1-\alpha)}} \\
&&\leq 2 \|X(\cdot,\omega,\eta)\|_{1-\alpha,|t-\tau|^{\frac{1}{2}},-r,T}+2|t-\tau|^{\frac{1}{2}(1-\alpha)} \|X(\cdot,\omega,\eta)\|_{1-\alpha}.
\end{eqnarray*}
Finally, by combining the two estimates, we get
\begin{eqnarray}\label{eq09}
\|\varphi(t,\omega,\eta)- \varphi(\tau,\omega,\eta)\|_{1-\alpha} &\leq& 2 \|X(\cdot,\omega,\eta)\|_{1-\alpha,|t-\tau|^{\frac{1}{2}},-r,T} \\ \nonumber
&+&\Big(2|t-\tau|^{\frac{1}{2}(1-\alpha)}+|t-\tau|^{1-\alpha}\Big) \|X(\cdot,\omega,\eta)\|_{1-\alpha}.
\end{eqnarray}
Observe that by the existence and uniqueness in Step 1, $X(\cdot,\omega,\eta) \in C^{0,1-\alpha}([-r,T],\R^d)$. Hence, as $t\to \tau$, the right hand side of \eqref{eq09} tends to $0$, which implies that $\varphi(t, \omega,\eta)$ is continuous w.r.t.\ $t$ at $\tau$ when $\omega, \eta$ fixed.

Furthermore, for fixed $\omega$
\begin{equation}\label{eq08}
\|\varphi(t_1,\omega,\eta_1) - \varphi(t,\omega,\eta)\|_{1-\alpha}  \leq \|\varphi(t_1,\omega,\eta_1) - \varphi(t_1,\omega,\eta)\|_{1-\alpha} + \|\varphi(t_1,\omega,\eta) - \varphi(t,\omega,\eta)\|_{1-\alpha}.
\end{equation}
By introducing the product space $[0,T] \times C^{0,1-\alpha}([-r,0],\R^d)$ with the metric
\[
\|(t_1,\eta_1) - (t,\eta)\| = \sqrt{|t_1-t|^2 + \|\eta_1 - \eta\|_{1-\alpha}^2},
\]
we see that if $(t_1,\eta_1) \to (t,\eta)$ then $t_1 \to t$ and $\eta_1 \to \eta$. Then the first term in the right hand side of inequality \eqref{eq08} tends to 0 because the continuity of $\varphi$ in $\eta$ is uniform w.r.t.\ $t$. Meanwhile the second term in the right hand side of inequality \eqref{eq08} also tends to $0$ as $t_1 \to t$ due to the continuity of $\varphi$ w.r.t.\ $t\in [0,T]$. Therefore, $\varphi(t,\omega,\eta)$ is continuous w.r.t.\ $(t,\eta)$ when $\omega$ is fixed.

Third, to prove that $\varphi(t,\cdot,\eta)$ is measurable with respect to $\omega$, we follow the proof of Theorem 5.1 in Boufoussi and Hajji \cite{bou1} by introducing the closed set $C^{0,1-\alpha}([-r,T],\eta) = \{f \in C^{0,1-\alpha}([-r,T],\R^d) \,|\, f_{|_{[-r,0]}}(\cdot) = \eta(\cdot)\}$ and the map $U:\Omega \times C^{0,1-\alpha}([-r,T],\eta) \to C^{0,1-\alpha}([-r,T],\eta)$ by
\begin{eqnarray*}
U_{\omega}(f)(t) = \left\{ \begin{array}{ll}
         \eta(t) & \mbox{if $t \in [-r,0]$},\\
        \eta(0) + \int_0^t F(f_s)ds + \int_0^t G(f_s)d \omega(s) & \mbox{if $t \geq 0$}.\end{array} \right.
\end{eqnarray*}

Using the equivalent norm in $C^{1-\alpha}([-r,T],\R^d)$ given by
\[
\|f\|_{1-\alpha,\lambda} = \sup \limits_{t\in [-r,T]} e^{-\lambda t} \|f(t)\| + \sup \limits_{-r\leq s<t\leq T} e^{-\lambda t} \frac{\|f(t)-f(s)\|}{|t-s|^{1-\alpha}},
\]
as proved in Theorem 5.1 in Boufoussi and Hajji \cite{bou1}, we can choose $\lambda = \lambda_0$ large enough and a constant $M_0$ independent of $f$ and $\omega$ such that with
\[
\mathcal{B}_{\lambda_0,\eta} := \{f \in C^{0,1-\alpha}([-r,T],\eta) \,:\, \|f\|_{1-\alpha,\lambda_0} \leq M_0\}
 \]
the map $U_{\omega}(f) \in \mathcal{B}_{\lambda_0,\eta}$ and such that $U_{\omega}(\cdot): \mathcal{B}_{\lambda_0,\eta} \to \mathcal{B}_{\lambda_0,\eta}$ is a contraction map, i.e.
\begin{equation}\label{contraction}
\|U_{\omega}(f) - U_{\omega}(g)\|_{1-\alpha,\lambda_0} \leq k_{\lambda_0,\omega} \|f-g\|_{1-\alpha,\lambda_0}
\end{equation}
with the contraction coefficient $k_{\lambda_0,\omega}<1$. For each  $f \in \mathcal{B}_{\lambda_0,\eta}$ and $\omega^1, \omega^2 \in W^{1-\alpha,\infty}_T([0,T],\R)$, since $f \in C^{0,1-\alpha}([0,T],\R^d) \subset W^{\alpha,1}_0([0,T],\R^d)$ and $\omega^1 - \omega^2 \in W^{1-\alpha,\infty}_T([0,T],\R)$, we can apply Proposition 4.1 in Nualart and R{\u{a}}{\c{s}}canu \cite{nualart3} to get
\begin{eqnarray}\label{Lipschitz}
\|U_{\omega^1}(f) - U_{\omega^2}(f)\|_{1-\alpha,\lambda_0} &=& \|U_{\omega^1-\omega^2}(f) \|_{1-\alpha,\lambda_0}\\\nonumber
&\leq& e^{r \lambda_0}\|U_{\omega^1}(f) - U_{\omega^2}(f)\|_{1-\alpha} \\ \nonumber
&\leq& \frac{e^{r \lambda_0}}{\Gamma(1-\alpha) \Gamma(\alpha)} \|\omega^1 - \omega^2\|_{1-\alpha,\infty,T} \|G(f(\cdot))\|_{\alpha,1}
\\ \nonumber
&\leq & \frac{e^{r \lambda_0}\max\{L_3,L_4\} (1+ \|f\|_{\alpha,1})}{\Gamma(1-\alpha) \Gamma(\alpha)}\|\omega^1 - \omega^2\|_{1-\alpha,\infty,T} \\ \nonumber
&\leq & \frac{e^{r \lambda_0} \max\{L_3,L_4\} (1+\sup \limits_{f \in \mathcal{B}_{\lambda_0,\eta}} \|f\|_{\alpha,1} )}{\Gamma(1-\alpha) \Gamma(\alpha)}\|\omega^1 - \omega^2\|_{1-\alpha,\infty,T} \\ \nonumber
&\leq& L_N \|\omega^1 - \omega^2\|_{1-\alpha,\infty,T},
\end{eqnarray}
where $L_N>0$ is independent of $f,\omega$. Hence, $U_{\omega}(f)$ is continuous and hence measurable with respect to $\omega$.

From \eqref{contraction} and \eqref{Lipschitz}, it is easy to see that the sequence
\[
f_n(\omega) := \underbrace{U_{\omega} \circ \dots \circ  U_{\omega}}_{n \ \text{times}}(f),\ n \geq 1,
\]
is measurable w.r.t.\ $\omega$ and converges to $X(\cdot,\omega,\eta)$ which is the fixed point of $U_{\omega}$. Therefore, we conclude that the unique solution $X(\cdot,\omega,\eta)$ is also measurable w.r.t.\ $\omega$. That implies that $\varphi(t,\omega,\eta) = X_t(\cdot,\omega,\eta) = X(t+ \cdot,\omega,\eta)$ is also measurable w.r.t.\ $\omega$.

Finally, since the product space $[0,T] \times C^{0,1-\alpha}([-r,0],\R^d)$ is separable, we apply Lemma III.14 in Castaing and Valadier \cite{castaing} to conclude that $\varphi$ is measurable w.r.t.\ $(t,\omega,\eta)$ which proves $\varphi$ to be a random dynamical system. The continuity w.r.t.\ $(t,\eta)$ then confirms $\varphi$ to be a continuous random dynamical system.
\end{proof}


\begin{remark}\label{rem1}
It is important to notice here that even though the solution $X(t,\omega,\eta)$ is continuous w.r.t.\ $t$, the cocycle $\varphi$ might not be continuous w.r.t.\ $t$ if we consider $\eta$ in $C^{1-\alpha}([-r,0],\R^d)$ instead of $C^{0,1-\alpha}([-r,0],\R^d)$. For a counterexample, choose $r = 1$ and consider the equation
\begin{eqnarray}\label{eq05}
dX(t) &=& 0, \ \forall t \geq 0, \\ \nonumber
X(\cdot) &=& \eta(\cdot) \in C^{1-\alpha}([-1,0], \R),
\end{eqnarray}
where $\eta(t) := |t|^{1-\alpha} \forall t\in [-1,0]$. It is easy to check that $\eta \in C^{1-\alpha}([-1,0],\R)$ with $\|\eta\|_{1-\alpha} = 2$. System \eqref{eq05} has a unique solution $X(t) = \eta(0)$ for $t \geq 0$ and $X(t) = \eta(t)$ for $t\in [-1,0]$. For fixed $t,\tau\in [0,1], t\ne \tau$ such that $|t-\tau| \leq 1$, by choosing $u = -\tau, s = -t \in [-1,0]$ in the right hand side of \eqref{eq11}, we get
\begin{eqnarray*}
\|\varphi(t,\omega,\eta) - \varphi(\tau,\omega,\eta)\|_{1-\alpha}&\geq& \frac{\|X(t-\tau,\omega,\eta) - X(0,\omega,\eta) - X(0,\omega,\eta)+ X(\tau-t,\omega,\eta)\|}{|t-\tau|^{1-\alpha}} \\
&\geq& \frac{\|X(|t-\tau|,\omega,\eta) + X(-|t-\tau|,\omega,\eta) - 2 X(0,\omega,\eta)\|}{|t-\tau|^{1-\alpha}}\\
&\geq& \frac{\|\eta(-|t-\tau|) - \eta(0)\|}{|t-\tau|^{1-\alpha}} \\
&\geq&\frac{\||t-\tau|^{1-\alpha} - 0\|}{|t-\tau|^{1-\alpha}} = 1
\end{eqnarray*}
Hence $\varphi$ is not continuous w.r.t. $t$ at any $\tau \in [0,1]$.

\end{remark}


\end{document}